\providecommand{\U}[1]{\protect\rule{.1in}{.1in}}
\theoremstyle{plain}
\newtheorem{claim}{Claim}
\newtheorem{conjecture}{Conjecture}
\newtheorem{lemma}{Lemma}
\newtheorem{proposition}{Proposition}
\newtheorem{remark}{Remark}
\newtheorem{theorem}{Theorem}
\numberwithin{equation}{section}
\begin{document}
\title[Multiplication on uniform $\lambda$-Cantor sets]{Multiplication on uniform $\lambda$-Cantor sets}
\author{jiangwen Gu}
\address{Department of Mathematics, Ningbo University, 315211, Ningbo, P. R. China}
\email{1811071001@nbu.edu.cn}
\author{kan Jiang}
\address{Department of Mathematics, Ningbo University, 315211, Ningbo, P. R. China}
\email{kanjiangbunnik@yahoo.com, jiangkan@nbu.edu.cn}
\author{lifeng Xi}
\address{Department of Mathematics, Ningbo University, 315211, Ningbo, P. R. China; Key
Laboratory of Computing and Stochastic Mathematics (Ministry of Education),
School of Mathematics and Statistics, Hunan Normal University, Changsha, Hunan
410081, P. R. China}
\email{xilifengningbo@yahoo.com, xilifeng@nbu.edu.cn}
\author{bing Zhao}
\address{Department of Mathematics, Ningbo University, 315211, Ningbo, P. R. China}
\email{zhaobing@nbu.edu.cn}
\thanks{Kan Jiang is the corresponding author. Supported by National Natural Science
Foundation of China (Nos. 11701302, 11831007, 11771226, 11371329, 11301346, 11671147)
and K.C. Wong Magna Fund in Ningbo University.}
\subjclass[2000]{28A80}
\keywords{self-similar set, uniform Cantor sets, arithmetic, Lebesgue measure}

\begin{abstract}
Let $C$ be the middle-third Cantor set. Define $C*C=\{x*y:x,y\in C\}$, where
$*=+,-,\cdot,\div$ (when $*=\div$, we assume $y\neq0$).  Steinhaus \cite{HS}
proved in 1917 that
\[
C-C=[-1,1], C+C=[0,2].
\]
In 2019, Athreya, Reznick and Tyson \cite{Tyson} proved that
\[
C\div C=\bigcup_{n=-\infty}^{\infty}\left[ 3^{-n}\dfrac{2}{3},3^{-n}\dfrac
{3}{2}\right] .
\]
In this paper, we give a description of the topological structure and Lebesgue
measure of $C\cdot C$. We indeed obtain corresponding results on the uniform
$\lambda$-Cantor sets.

\end{abstract}
\maketitle

\section{Introduction}

The middle-third Cantor set, denoted by $C$, is a celebrated set in fractal
geometry. Many aspects of this set were analyzed \cite{FG,FW2000,Hutchinson}.
In this paper, we shall investigate the arithmetic on the middle-third Cantor
set. Arithmetic on the fractal sets has some connections to the geometric
measure theory, dynamical systems, and number theory, see
\cite{Yoccoz,BABA1,KarmaCor1,Bond,KCor,KM,DajaniDeVrie,KarmaOomen,Hochman2012,JM,MO,Kan2019,Palis,PS}
and references therein. However, relatively few papers investigate the
structure of the fractal sets under some arithmetic operations. The theme of
this paper is to explore this problem.

Let $C*C=\{x*y:x,y\in C\}$, where $*=+,-,\cdot,\div$ (when $*=\div$, we assume
$y\neq0$). Arithmetic on the middle-third Cantor set was pioneered by
Steinhaus \cite{HS} who proved that
\[
C+C=[0,2],C-C=[-1,1].
\]
It is natural to ask the multiplication and division on $C$. For this
question, recently, Athreya, Reznick and Tyson \cite{Tyson} proved that
$17/21\leq\mathcal{L}(C\cdot C)\leq8/9$, where $\mathcal{L}$ denotes the
Lebesgue measure. Moreover, they also proved that
\[
C\div C=\bigcup_{n=-\infty}^{\infty}\left[ 3^{-n}\dfrac{2}{3},3^{-n}\dfrac
{3}{2}\right] .
\]
Let $f$ be a continuous function defined on an open set $U\subset
\mathbb{R}^{2}$. Denote the continuous image by
\[
f_{U}(C,C)=\{f(x,y):(x,y)\in(C\times C)\cap U\}.
\]
Jiang and Xi \cite{JX2018} proved that if $\partial_{x}f$, $\partial_{y}f$ are
continuous on $U,$ and there is a point $(x_{0},y_{0})\in(C\times C)\cap U$
such that one of the following conditions is satisfied,
\[
1<\left\vert \frac{\partial_{y}f|_{(x_{0},y_{0})}}{\partial_{x}f|_{(x_{0}%
,y_{0})}}\right\vert <3 \mbox{ or } 1<\left\vert \frac{\partial_{x}%
f|_{(x_{0},y_{0})}}{\partial_{y}f|_{(x_{0},y_{0})}}\right\vert <3,
\]
then $f_{U}(C,C)$ has a non-empty interior. In particular, $C\cdot C$ contains
infinitely many intervals. To date, to the best of our knowledge, there are
few results for the structure of $C\cdot C$. This is one of the main
motivations of this paper. We shall give an answer to this question. Let
$K_{\lambda}$ be the uniform $\lambda$-Cantor set generated by the IFS
\[
\{ f_{i}(x)=\lambda x+i\delta\}_{i=0}^{m-1},
\]
where $m\geq2, 0<\lambda<1/m,g=\dfrac{1-m\lambda}{m-1},$ and $\delta
=\lambda+g$. Suppose $\sigma=i_{1}i_{2}\cdots i_{n}\in\{0,1,\cdots m-1\}^{n} $
with its length $|\sigma|=n\geq0$. If $n=0$, then $\sigma=\emptyset$. In this
case, $f_{\sigma}([0,1])=[0,1].$ For any $I=f_{\sigma}([0,1])$, let
$I^{(i)}=f_{\sigma i}([0,1]), 0\leq i\leq m-1$. Denote
\[
I^{(i)}\cdot I^{(j)}=\{xy:x\in I^{(i)}, y\in I^{(j)}\}.
\]
Define
\[
\widehat{I}=\left\lbrace
\begin{array}
[c]{cc}%
\cup_{0\leq p<q}\left( I^{(p)}\cdot I^{(q)}\right)  & \mbox{ if } |\sigma
|\geq1\\
\cup_{1\leq p<q}\left( I^{(p)}\cdot I^{(q)}\right)  & \mbox{ if }
\sigma=\emptyset.
\end{array}
\right.
\]
We denote
\[
\mathcal{A}=\{f_{\sigma}([0,1]) : \sigma=i_{1}i_{2}\cdots i_{n}, i_{1}%
\geq1\}\cup\{[0,1]\}.
\]
Let
\[
K_{R}=\cup_{i=1}^{m-1}f_{i}(K_{\lambda}).
\]
Now we state the first result of this paper.

\begin{theorem}
\label{main1} Let $K_{\lambda}$ be the uniform $\lambda$-Cantor set. If
$1/(m+1)\leq\lambda<1/m$, then
\[
K_{\lambda}\cdot K_{\lambda}= \left( \bigcup_{n=0}^{\infty}\lambda^{n}
(K_{R}\cdot K_{R})\right) \bigcup\left\{ 0\right\} ,
\]
where
\[
K_{R}\cdot K_{R}=\left( \bigcup_{I\in\mathcal{A}}\widehat{I}\right)
\bigcup\{x^{2}:x\in K_{R}\}.
\]

\end{theorem}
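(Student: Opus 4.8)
The plan is to treat the two displayed identities in turn. The first one is purely structural and needs no restriction on $\lambda$: from $K_{\lambda}=f_{0}(K_{\lambda})\cup K_{R}$ one gets, on iterating and using $f_{0}(x)=\lambda x$, the decomposition $K_{\lambda}=\{0\}\cup\bigcup_{n\geq 0}\lambda^{n}K_{R}$ (here $\lambda^{n}K_{R}$ is the $n$-fold image of $K_{R}$ under $f_{0}$, and $\{0\}=\bigcap_{n}\lambda^{n}K_{\lambda}$). Hence for $x,y\in K_{\lambda}$ either some factor vanishes and $xy=0$, or $x=\lambda^{n_{1}}z_{1}$ and $y=\lambda^{n_{2}}z_{2}$ with $z_{1},z_{2}\in K_{R}$, so that $xy=\lambda^{n_{1}+n_{2}}z_{1}z_{2}\in\lambda^{n_{1}+n_{2}}(K_{R}\cdot K_{R})$; the reverse inclusion is immediate from $\lambda^{n}K_{R}\subseteq K_{\lambda}$. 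It remains to identify $K_{R}\cdot K_{R}$.

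For the inclusion $K_{R}\cdot K_{R}\subseteq\bigl(\bigcup_{I\in\mathcal{A}}\widehat{I}\bigr)\cup\{x^{2}:x\in K_{R}\}$, take $x,y\in K_{R}$. If $x=y$ then $xy=x^{2}$. If $x\neq y$, then, since cylinder diameters tend to $0$, there is a largest $k\geq 0$ for which $x$ and $y$ lie in a common level-$k$ cylinder $I$, and then (the generating intervals being pairwise disjoint) $x$ and $y$ lie in two distinct children $I^{(p)},I^{(q)}$ of $I$; interchanging $x,y$ we may take $p<q$. As $x\in K_{R}$, the first digit of $I$ is at least $1$, so either $k\geq 1$ and $I\in\mathcal{A}$, or $k=0$, $I=[0,1]$, and $p,q$ are the nonzero first digits of $x,y$, whence $1\leq p<q$; in all cases $xy\in I^{(p)}\cdot I^{(q)}\subseteq\widehat{I}$.

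The opposite inclusion is where the hypothesis $1/(m+1)\leq\lambda<1/m$ enters. Since $x^{2}=x\cdot x\in K_{R}\cdot K_{R}$, it suffices to prove $I^{(p)}\cdot I^{(q)}\subseteq K_{R}\cdot K_{R}$ for each $I\in\mathcal{A}$ and each admissible pair $p<q$. The first digit of $I^{(p)}$ and of $I^{(q)}$ is at least $1$, so $K_{\lambda}\cap I^{(p)}\subseteq K_{R}$ and $K_{\lambda}\cap I^{(q)}\subseteq K_{R}$, and the containment reduces to the sharp equality
\[
(K_{\lambda}\cap I^{(p)})\cdot(K_{\lambda}\cap I^{(q)})=I^{(p)}\cdot I^{(q)},
\]
i.e. to showing that the product of these two Cantor subsets already fills the whole interval $I^{(p)}\cdot I^{(q)}$. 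Write $I^{(p)}=[c,c+\ell]$, $I^{(q)}=[c',c'+\ell]$ with $0<c<c'$ and $\ell$ their common length; parametrising $x=c+\ell u$, $y=c'+\ell v$ with $u,v\in K_{\lambda}$ turns the claim into $\Phi(K_{\lambda}\times K_{\lambda})=[0,c+c'+\ell]$, where $\Phi(u,v)=c'u+cv+\ell uv$ is strictly increasing in each variable with $\Phi([0,1]^{2})=[0,c+c'+\ell]$.

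Proving this last equality is the main obstacle; I would do it by a nested-covering argument. Subdivide $[0,1]^{2}$ into its $m^{2}$ level-one cylinder cells. A direct computation gives, for $u',v'\in[0,1]$, $\Phi(i\delta+\lambda u',\,j\delta+\lambda v')=\Phi(i\delta,j\delta)+\lambda\,\widetilde{\Phi}(u',v')$, where $\widetilde{\Phi}(u',v')=\widetilde{c}'\,u'+\widetilde{c}\,v'+\widetilde{\ell}\,u'v'$ is again of the same form, with $(\widetilde{c},\widetilde{c}',\widetilde{\ell})=(c+\ell i\delta,\,c'+\ell j\delta,\,\ell\lambda)$; in particular the image of $\Phi$ on the full square $[i\delta,i\delta+\lambda]\times[j\delta,j\delta+\lambda]$ is $\bigl[\Phi(i\delta,j\delta),\,\Phi(i\delta+\lambda,j\delta+\lambda)\bigr]$, whose endpoints lie in $\Phi(K_{\lambda}\times K_{\lambda})$. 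The assumption $1/(m+1)\leq\lambda$, equivalently $\lambda\geq(m-1)g$ (each gap is short relative to each basic piece), is exactly the quantitative input ensuring (a) that these $m^{2}$ cell images, listed by left endpoint, overlap consecutively and hence cover $[0,c+c'+\ell]$, and (b) that every updated triple $(c+\ell i\delta,\,c'+\ell j\delta,\,\ell\lambda)$ again satisfies the inequalities behind (a), so that (a) persists at every level of subdivision. Iterating, for each $N$ the images of the level-$N$ cells cover $[0,c+c'+\ell]$ by intervals of length $O(\lambda^{N})$, each meeting the compact set $\Phi(K_{\lambda}\times K_{\lambda})$; letting $N\to\infty$ forces $\Phi(K_{\lambda}\times K_{\lambda})=[0,c+c'+\ell]$. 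The delicate point is the inheritance in (b): it unwinds to a finite set of elementary inequalities in $c,c',\ell,\delta$ and the digits, and it is precisely the bound $\lambda\geq 1/(m+1)$ that makes all of them hold; one expects this bound to be sharp, since for $\lambda<1/(m+1)$ some interval $I^{(p)}\cdot I^{(q)}$ should fail to be covered.
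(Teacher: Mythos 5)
Your architecture coincides with the paper's: the same decomposition $K_{\lambda}=\{0\}\cup\bigcup_{n\geq0}\lambda^{n}K_{R}$ (the paper's Lemma \ref{translate}), the same case split $x=y$ versus $x\neq y$ via the last common cylinder for the inclusion $K_{R}\cdot K_{R}\subseteq(\bigcup_{I\in\mathcal{A}}\widehat{I})\cup\{x^{2}:x\in K_{R}\}$, and the same reduction of the reverse inclusion to the equality $(K_{\lambda}\cap I^{(p)})\cdot(K_{\lambda}\cap I^{(q)})=I^{(p)}\cdot I^{(q)}$ (the paper's Proposition \ref{pro1}), proved by a nested-covering/invariance scheme (the paper's Lemma \ref{key1}). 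Those parts of your write-up are correct, and your self-similarity computation for $\Phi(i\delta+\lambda u',j\delta+\lambda v')$ is right.

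The genuine gap is that you never prove the one step where the hypothesis $\lambda\geq1/(m+1)$ actually does work: that the $m^{2}$ sub-rectangle images under $(x,y)\mapsto xy$ overlap so as to cover the whole product interval, and that this persists under refinement. You assert items (a) and (b) and say they ``unwind to a finite set of elementary inequalities'' that the bound makes hold, but this is precisely the content of the theorem, and it is not routine. In the paper it occupies the bulk of the proof of Proposition \ref{pro1}: the cells are chained not by left endpoint but in a specific anti-diagonal order; the within-diagonal overlaps $(A^{(i)}\times B^{(j)})\rightarrow(A^{(i+1)}\times B^{(j-1)})$ require showing $(x+\lambda t)(y+\lambda t)\geq(x+\delta t)(y-\delta t)$, which reduces to positivity of two explicit quadratics $h(\lambda)$ and $v(\lambda)$ on $[1/(m+1),1/m)$; and the transitions between consecutive anti-diagonals, $(A^{(k)}\times B^{(0)})\rightarrow(A^{(0)}\times B^{(k+1)})$, need a separate convexity argument (the paper's Claim \ref{claim00}) together with the inequality $\lambda\geq1/(2m-1)$. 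Your formulation (a) --- that the images sorted by left endpoint overlap consecutively --- is itself an unverified claim and is not obviously the correct ordering; until you exhibit a concrete chain and verify its overlap inequalities uniformly in the data $(c,c',\ell)$ arising at all levels (the paper does this uniformly using only $a\geq\delta$, $b\leq1$), the proof is incomplete at its central point.
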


\begin{remark}
When $\lambda=1/3, m=2$, we give the structure of $C\cdot C$. With the help of
computer program, we are able to calculate the value of the Lebesgue measure
of $C\cdot C$, which is about $0.80955.$
\end{remark}

\begin{remark}
The main ingredient of $K_{R}\cdot K_{R}$ is $\bigcup_{I\in\mathcal{A}%
}\widehat{I}$ as the Lebesgue measure of $\{x^{2}:x\in K_{R}\}$ is zero. We
shall use this fact in the proof of next result. It is natural to consider
that whether some points in $\{x^{2}:x\in K_{R}\}$ can be absorbed by
$\bigcup_{I\in\mathcal{A}}\widehat{I}$. We may prove, for instance, that for
$m=2$, if $0.44<\lambda<1/2$, then
\[
K_{R}\cdot K_{R}\setminus\bigcup_{I\in\mathcal{A}}\widehat{I}=\{(1-\lambda
)^{2},1\}.
\]
Equivalently, in this case
\[
\bigcup_{I\in\mathcal{A}}\widehat{I}=((1-\lambda)^{2},1).
\]

\end{remark}

The next result is to consider the function $\Phi(\lambda)=\mathcal{L}%
(K_{\lambda}\cdot K_{\lambda})$. In terms of some basic results in analysis,
we prove the following result.

\begin{theorem}
\label{main2} If $\lambda\in[1/(m+1),1/m),$ then the function $\mathcal{L}%
(K_{\lambda}\cdot K_{\lambda})$ is continuous.
\end{theorem}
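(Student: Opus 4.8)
The plan is to run everything through the explicit description of Theorem \ref{main1} and reduce the computation of $\mathcal{L}(K_\lambda\cdot K_\lambda)$ to finite unions of intervals whose endpoints are polynomials in $\lambda$. First I would dispose of the square term: at step $k$ the Cantor set $K_\lambda$ is covered by $m^k$ intervals of length $\lambda^k$, whose total length $(m\lambda)^k\to 0$, so $\mathcal{L}(K_R)=0$; since $x\mapsto x^2$ is Lipschitz on $[0,1]$, the set $\{x^2:x\in K_R\}$ is Lebesgue-null, and Theorem \ref{main1} gives
\[
\mathcal{L}(K_\lambda\cdot K_\lambda)=\mathcal{L}\Big(\bigcup_{n=0}^\infty\lambda^n\bigcup_{I\in\mathcal{A}}\widehat{I}\Big)=:\mathcal{L}(S_\lambda).
\]
Since continuity is a local property, it suffices to prove $\lambda\mapsto\mathcal{L}(S_\lambda)$ is continuous on each compact subinterval $J=[a,b]\subseteq[1/(m+1),1/m)$; fix such a $J$, so that $mb<1$. (It is essential to localize here, since the relevant geometric ratio $m\lambda$ tends to $1$ as $\lambda\to 1/m^-$, so no estimate can be uniform on the whole half-open interval.)

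The heart of the argument is a uniform truncation estimate. For $I=f_\sigma([0,1])\in\mathcal{A}$ with $|\sigma|=n\ge 0$, every factor $I^{(p)}=f_{\sigma p}([0,1])$ appearing in $\widehat{I}$ is an interval of length $\lambda^{n+1}$ contained in $[0,1]$, with some left endpoint $\ell_p\in[0,1]$, so $I^{(p)}\cdot I^{(q)}=[\ell_p\ell_q,(\ell_p+\lambda^{n+1})(\ell_q+\lambda^{n+1})]$ has length $(\ell_p+\ell_q+\lambda^{n+1})\lambda^{n+1}\le 3\lambda^{n+1}$. As there are at most $m^n$ such $I$ at level $n$ and at most $\binom{m}{2}$ pairs $p<q$, we get $\sum_{I\in\mathcal{A},\,|\sigma|=n}\mathcal{L}(\widehat{I})\le 3\binom{m}{2}\lambda\,(m\lambda)^n$. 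Combining this with $\lambda^n(K_R\cdot K_R)\subseteq[0,\lambda^n]$ and setting
\[
T_N^{(\lambda)}:=\bigcup_{n=0}^{N}\ \bigcup_{I\in\mathcal{A},\,|\sigma|\le N}\lambda^n\widehat{I},
\]
one obtains a bound of the shape $\mathcal{L}(S_\lambda\setminus T_N^{(\lambda)})\le \dfrac{b^{N+1}}{1-b}+\dfrac{3\binom{m}{2}b}{1-b}\cdot\dfrac{(mb)^{N+1}}{1-mb}$, which tends to $0$ as $N\to\infty$, uniformly for $\lambda\in J$.

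It remains to note that for each fixed $N$ the map $\lambda\mapsto\mathcal{L}(T_N^{(\lambda)})$ is continuous on $J$. Indeed $T_N^{(\lambda)}$ is a union of finitely many closed intervals of the form $\lambda^n[\ell_p\ell_q,(\ell_p+\lambda^{n+1})(\ell_q+\lambda^{n+1})]$; since $\delta=(1-\lambda)/(m-1)$, each $\ell_p=f_\sigma(0)+p\delta\lambda^n$ is a polynomial in $\lambda$, hence so are both endpoints of every constituent interval, and the left endpoint never exceeds the right. The Lebesgue measure of a finite union of intervals with continuously varying, correctly ordered endpoints is continuous — for instance by writing $\mathcal{L}(T_N^{(\lambda)})=\int_0^1\mathbf{1}_{T_N^{(\lambda)}}(t)\,dt$ and invoking dominated convergence, the integrand converging as $\lambda'\to\lambda$ for every $t$ outside the finite endpoint set of $T_N^{(\lambda)}$. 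Therefore $\mathcal{L}(K_\lambda\cdot K_\lambda)=\mathcal{L}(S_\lambda)$ is a uniform limit on $J$ of continuous functions, hence continuous on $J$, and, $J$ being arbitrary, on all of $[1/(m+1),1/m)$.

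The step I expect to be the main obstacle is the uniform truncation estimate of the third sentence of paragraph two — not the length bound $3\lambda^{n+1}$ or the counting of level-$n$ products, which are routine, but the structural realization that one must pass to compact subintervals (so that the ratio $m\lambda$ stays bounded away from $1$) and that doing so is enough, because continuity is local. Once the uniform bound $\mathcal{L}(S_\lambda\setminus T_N^{(\lambda)})\to 0$ is in place, the continuity of the finite-stage measures and the uniform-limit conclusion are elementary.
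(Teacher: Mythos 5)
Your proof is correct and follows essentially the same route as the paper's: discard the null set $\{x^{2}:x\in K_{R}\}$, truncate the union at a finite level in both the scaling index and the rank of $I$, bound the tail uniformly for $\lambda$ bounded away from $1/m$ via the $O(\lambda^{n+1})$ length estimate for $\widehat{I}$ and the geometric count of basic intervals, and conclude by uniform convergence of the continuous finite-stage measures (the paper's $\phi_{n}$, your $\mathcal{L}(T_{N}^{(\lambda)})$). The only differences are cosmetic, e.g.\ you justify the continuity of the finite-stage measure explicitly by dominated convergence where the paper simply asserts it.
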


This paper is arranged as follows. In section 2, we give the proofs of main
theorems. In section 3, we give some remarks and problems.

\section{Proof of Main results}
\noindent In this section, we shall give the proofs of the main theorems.
\subsection{Proof of Theorem \ref{main1}}
Let $E=[0,1]$. For any $(i_{1}\cdots i_{n})\in\{0,1,\cdots m-1\}^{n}$, we call
$f_{i_{1}\cdots i_{n}}(E)$ a basic interval with length $\lambda^{n}$. Denote
by $E_{n}$ ($n\geq1$) the collection of all the basic intervals with length $\lambda^{n}%
$. Let $J=f_{\sigma}([0,1])\in E_{n}$ be a basic interval for some $\sigma\in\{0,1,\cdots, m-1\}^n$. Denote $\widetilde{J}=\cup_{i=0}^{m-1}f_{\sigma i}([0,1])$.  Let
$[A,B]\subset[0,1]$, where $A$ and $B$ are the left and right endpoints of
some basic intervals in $E_{k}$ for some $k\geq1$, respectively. $A$ and $B$
may not be in the same basic interval.  Let $F_{k}$ be the collection of all the basic intervals
in $[A,B]$ with length $\lambda^{k}, k\geq k_{0}$ for some $k_{0}\in
\mathbb{N}^{+}$, i.e. the union of all the elements of $F_{k}$ is denoted by
$G_{k}=\cup_{i=1}^{t_{k}}I_{k,i}$, where $t_{k}\in\mathbb{N}^{+}$, $I_{k,i}\in
E_{k}$ and $I_{k,i}\subset[A,B]$. Clearly, by the definition of $G_{n}$, it
follows that $G_{n+1}\subset G_{n}$ for any $n\geq k_{0}.$
Similarly, let $M$ and $N$ be the left and right endpoints of
some basic intervals in $E_{k}$ for some $k\geq k_0$. Denote by $G_{k}^{\prime}$  the union of all the basic intervals with length $\lambda^k$ such that all of these basic intervals are subsets of $[M,N]$, i.e.
$G_{k}^{\prime}=\cup_{i=1}^{t_{k}^{\prime}}I_{k,i}^{\prime}$, where $t_{k}^{\prime}\in\mathbb{N}^{+}$, $I_{k,i}^{\prime}\in
E_{k}$ and $I_{k,i}^{\prime}\subset[M,N]$.
Now, we state a key lemma of our paper.
\begin{lemma}
\label{key1} Assume $F:\mathbb{R}^{2}\to\mathbb{R}$ is a continuous function.
Suppose $A$ and $B$ ($M$ and $N$) are the left and right endpoints of some basic intervals
in $E_{k_{0}}$ for some $k_{0}\geq1$, respectively. Then $K_{\lambda}%
\cap[A,B]=\cap_{n={k_{0}}}^{\infty}G_{n}$ ($K_{\lambda}%
\cap[M,N]=\cap_{n={k_{0}}}^{\infty}G_{n}^{\prime}$). Moreover, if for any $n\geq k_{0}$
and any basic intervals $I_{1}\subset G_{n}$, $ I_{2}\subset G_{n}^{\prime}$
\[
F(I_{1}, I_{2})=F(\widetilde{I_{1}}, \widetilde{I_{2}}),
\]
then $F(K_{\lambda}\cap[A,B],K_{\lambda}\cap[M,N] )=F(G_{k_{0}}, G_{k_{0}}^{\prime}).$
\end{lemma}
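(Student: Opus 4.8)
The plan is to establish the two identities $K_{\lambda}\cap[A,B]=\cap_{n\ge k_{0}}G_{n}$ and $K_{\lambda}\cap[M,N]=\cap_{n\ge k_{0}}G_{n}^{\prime}$ first, and then use the hypothesis $F(I_{1},I_{2})=F(\widetilde{I_{1}},\widetilde{I_{2}})$ to propagate the image identity $F(K_{\lambda}\cap[A,B],K_{\lambda}\cap[M,N])=F(G_{n},G_{n}^{\prime})$ from level $n$ to level $n+1$, and finally pass to the limit using continuity and compactness. For the first part, the inclusion $K_{\lambda}\cap[A,B]\subset\cap_{n}G_{n}$ is immediate because any point of $K_{\lambda}\cap[A,B]$ lies in some basic interval of level $n$ for every $n$, and that basic interval is forced to be a subset of $[A,B]$ once $A,B$ are themselves endpoints of level-$k_{0}$ basic intervals (here the alignment of the Cantor construction along the grid of basic intervals is what makes ``lies in $[A,B]$'' equivalent to ``is a subinterval of $[A,B]$''). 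The reverse inclusion follows since $\cap_{n}G_{n}$ is a decreasing intersection of compact sets each contained in $[A,B]$, and its points are exactly those belonging to a nested sequence of basic intervals, hence in $K_{\lambda}$.

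For the image identity, I would argue by induction on $n\ge k_{0}$ that
\[
F\bigl(K_{\lambda}\cap[A,B],\,K_{\lambda}\cap[M,N]\bigr)=F(G_{n},G_{n}^{\prime}).
\]
The key observation is that $G_{n+1}=\bigcup_{I_{1}\subset G_{n}}\widetilde{I_{1}}$ restricted appropriately — more precisely, the level-$(n+1)$ basic intervals inside $[A,B]$ are exactly the children of the level-$n$ basic intervals inside $[A,B]$ — so
\[
F(G_{n},G_{n}^{\prime})=\bigcup_{\substack{I_{1}\subset G_{n}\\ I_{2}\subset G_{n}^{\prime}}}F(I_{1},I_{2})
=\bigcup_{\substack{I_{1}\subset G_{n}\\ I_{2}\subset G_{n}^{\prime}}}F(\widetilde{I_{1}},\widetilde{I_{2}})
=F(G_{n+1},G_{n+1}^{\prime}),
\]
where the middle equality is precisely the hypothesis of the lemma applied to each pair of level-$n$ basic intervals, and the first and last equalities just say that $F$ of a union equals the union of $F$'s. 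Thus $F(G_{n},G_{n}^{\prime})$ is independent of $n\ge k_{0}$; call this common set $S$. Since $K_{\lambda}\cap[A,B]\subset G_{n}$ and $K_{\lambda}\cap[M,N]\subset G_{n}^{\prime}$, we get $F(K_{\lambda}\cap[A,B],K_{\lambda}\cap[M,N])\subset S$.

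For the reverse inclusion, fix $z\in S=F(G_{n},G_{n}^{\prime})$ for every $n$. Then for each $n$ there are points $x_{n}\in G_{n}$, $y_{n}\in G_{n}^{\prime}$ with $F(x_{n},y_{n})=z$. By compactness of $[A,B]\times[M,N]$, pass to a convergent subsequence $(x_{n_{k}},y_{n_{k}})\to(x,y)$. Since $G_{n}$ is decreasing and closed, $x\in\cap_{n}G_{n}=K_{\lambda}\cap[A,B]$, and likewise $y\in K_{\lambda}\cap[M,N]$; by continuity of $F$, $F(x,y)=z$. This gives $S\subset F(K_{\lambda}\cap[A,B],K_{\lambda}\cap[M,N])$ and finishes the proof. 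The main obstacle is the bookkeeping in the induction step: one must check carefully that the collection of level-$(n+1)$ basic intervals lying in $[A,B]$ is exactly $\bigcup\{\widetilde{I}:I\in F_{n}\}$ — i.e. that no new level-$(n+1)$ interval appears near the endpoints that was not already a child of a level-$n$ interval in $F_{n}$ — and this is exactly where the assumption that $A,B$ are endpoints of basic intervals in $E_{k_{0}}$ (so that the region $[A,B]$ is ``grid-aligned'' at every subsequent level) is used.
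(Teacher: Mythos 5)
Your proposal is correct and follows essentially the same route as the paper: establish $K_{\lambda}\cap[A,B]=\cap_{n\geq k_{0}}G_{n}$ from the nesting $G_{n+1}\subset G_{n}$, use the hypothesis to show $F(G_{n},G_{n}^{\prime})=F(G_{n+1},G_{n+1}^{\prime})$, and pass to the limit via continuity. The only difference is that you spell out (via the compactness/subsequence argument) the step the paper compresses into the single assertion $F(K_{\lambda}\cap[A,B],K_{\lambda}\cap[M,N])=\cap_{n}F(G_{n},G_{n}^{\prime})$ ``by continuity of $F$,'' which is a welcome addition rather than a deviation.
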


\begin{proof}
Let $G_{n}=\cup_{i=1}^{t_{n}}I_{n,i}$ for some $t_{n}\in\mathbb{N}^{+}$, where
$I_{n,i}\in E_{n}$ and $I_{n,i}\subset[A,B]$. Then by the construction of
$G_{n}$, i.e. $G_{n+1}\subset G_{n}$ for any $n\geq k_{0}$, it follows that
\[
K_{\lambda}\cap[A,B]=\cap_{n=k_{0}}^{\infty}G_{n}.
\]
Analogously, we can prove that
\[
K_{\lambda}\cap[M,N]=\cap_{n=k_{0}}^{\infty}G_{n}^{\prime}.
\]
By the continuity of $F$, we conclude that
\begin{align}
\label{identity}F(K_{\lambda}\cap[A,B],K_{\lambda}\cap[M,N])=\cap_{n=k_{0}%
}^{\infty}F(G_{n}, G_{n}^{\prime}).
\end{align}
By virtue of the relation $G_{n+1}=\widetilde{G_{n}}$  ($G_{n+1}^{\prime}=\widetilde{G_{n}^{\prime}}$) and the condition in the
lemma, we have
\begin{align*}
F(G_{n}, G_{n}^{\prime}) & = \cup_{1\leq i\leq t_{n},1\leq j\leq t_{n}^{\prime}}F(I_{n,i}, I_{n,j}^{\prime})\\
& = \cup_{1\leq i\leq t_{n},1\leq j\leq t_{n}^{\prime}}F(\widetilde{I_{n,i}},\widetilde{I_{n,j}^{\prime}})\\
& = F(\cup_{1\leq i\leq t_{n}}\widetilde{I_{n,i}},\cup_{1\leq j\leq t_{n}^{\prime}%
}\widetilde{I_{n,j}^{\prime}})\\
& =F(G_{n+1}, G_{n+1}^{\prime}).
\end{align*}
Therefore, $F(K_{\lambda}\cap[A,B],K_{\lambda}\cap[M,N] )=F(G_{k_{0}},
G_{k_{0}}^{\prime})$ follows immediately from identity (\ref{identity}) and $F(G_{n},
G_{n}^{\prime})=F(G_{n+1}, G_{n+1}^{\prime})$ for any $n\geq k_{0}.$
\end{proof}

\noindent Recall that $\delta=\lambda+g$ and $g=\frac{1-m\lambda}{m-1}$ with $m\geq2.$

\begin{proposition}
\label{pro1} Let $\lambda\in\lbrack\frac{1}{m+1},\frac{1}{m})$. Suppose
$I\in\mathcal{A}$ and $I^{(0)},\cdots,I^{(m-1)}$ are basic subintervals of $I$
from left to right such that $|I^{(i)}|/|I|=\lambda$ for all $0\leq i\leq
m-1.$ Then for any $p<q,$ we have
\[
(I^{(p)}\cap K_{\lambda})\cdot(I^{(q)}\cap K_{\lambda})=I^{(p)}\cdot I^{(q)}.
\]

\end{proposition}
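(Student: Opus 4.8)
The plan is to apply Lemma~\ref{key1} with $F(x,y)=xy$, so the task reduces to verifying the self-replication hypothesis $F(I_1,I_2)=F(\widetilde{I_1},\widetilde{I_2})$ for the relevant pairs of basic intervals. Concretely, write $I^{(p)}=[a,a+\lambda|I|]$ and $I^{(q)}=[b,b+\lambda|I|]$ with $b\ge a+\delta|I|$ (since $p<q$), and note that because $I\in\mathcal{A}$ we have $a\ge 0$ and in fact the left endpoint of $I^{(p)}$ is bounded below (when $\sigma=\emptyset$ we use $p\ge 1$, so $a\ge\delta>0$; when $|\sigma|\ge1$ the interval $I$ already sits to the right of $0$). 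The product set $I^{(p)}\cdot I^{(q)}=[ab,(a+\lambda|I|)(b+\lambda|I|)]$ is an interval of length $\ell_1:=(a+\lambda|I|)(b+\lambda|I|)-ab$. First I would subdivide each of $I^{(p)},I^{(q)}$ into its $m$ children and observe that the union of the $m^2$ product rectangles' image intervals is again an interval provided consecutive ones overlap; the gap between the right end of one child-product and the left end of the next is controlled by the gap length $g\lambda|I|$ in the $\lambda$-Cantor construction, while the length of each child-product interval is of order $\lambda\cdot(\text{position})\cdot\lambda|I|$. The inequality $\lambda\ge 1/(m+1)$ is exactly what guarantees $\lambda\ge g$ (equivalently $m\lambda\ge 1-\lambda$), which forces these overlaps and hence keeps the union connected.

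The key computation is therefore the following: after one subdivision step, the relevant "worst case" is the leftmost child of $I^{(p)}$ paired against two consecutive children of $I^{(q)}$ (and symmetrically), since the product $xy$ has the smallest $x$-derivative-weighted spread there. I would show that the overlap condition at every such junction is implied by $\lambda(m+1)\ge 1$ together with the lower bound on the left endpoint of $I^{(p)}$ coming from $I\in\mathcal{A}$; the fact that $p\ge 1$ when $I=[0,1]$ is precisely what prevents the degenerate case where $x$ can be $0$ and the product interval collapses. Because the geometry scales: passing from $(I^{(p)},I^{(q)})$ at level $|\sigma|$ to $(\widetilde{I^{(p)}},\widetilde{I^{(q)}})$ and then to deeper generations reproduces the same configuration with all lengths multiplied by $\lambda$ and positions staying in the same relative range, the overlap inequality, once verified at the first step, holds verbatim at every subsequent step. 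Thus the hypothesis of Lemma~\ref{key1} is satisfied with $k_0=|\sigma|+1$ (taking $A,B$ to be the endpoints of $I$ and $M,N$ likewise, or rather the endpoints of $I^{(p)}$ and $I^{(q)}$), and the lemma yields $(I^{(p)}\cap K_\lambda)\cdot(I^{(q)}\cap K_\lambda)=I^{(p)}\cdot I^{(q)}$.

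I expect the main obstacle to be bookkeeping the positions carefully enough to make the overlap inequality uniform. The product map is not linear, so the length of a child-product interval depends on \emph{where} the children sit, not just on $\lambda$; one must check that the ratio (child-product length)$/$(gap to next child-product) stays $\ge 1$ across all $m$ children of $I^{(q)}$ and all positions $a$ allowed by $I\in\mathcal{A}$. Writing $a=c\cdot|I|$, $b=d\cdot|I|$ with $0\le c<d\le 1-\lambda$ normalized coordinates, the condition becomes a polynomial inequality in $c,d,\lambda,m$ that should reduce, after using $d\ge c+\delta$ and minimizing over the feasible region, to $m\lambda+\lambda\ge 1$. A secondary point to be careful about is the boundary role of $\{x^2:x\in K_R\}$ — here we only ever multiply intervals $I^{(p)},I^{(q)}$ with $p\ne q$ (strictly $p<q$), so the diagonal never arises and the argument is unaffected; the diagonal contributions are handled separately in the statement of Theorem~\ref{main1}.
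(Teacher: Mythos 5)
Your overall strategy matches the paper's: reduce to Lemma \ref{key1} with $F(x,y)=xy$ and verify the self-replication hypothesis by showing that the $m^{2}$ child product intervals $A^{(i)}\cdot B^{(j)}$ cover $I^{\ast}\cdot J^{\ast}$ without gaps. However, there is a genuine gap at the heart of the argument: you never identify the order in which these $m^{2}$ intervals actually sit on the real line, and the adjacency you propose to check (the leftmost child of $I^{(p)}$ against two consecutive children of $I^{(q)}$) is not the correct one. Because $xy$ is increasing in both coordinates, the product intervals interleave according to the anti-diagonal index $i+j$ (and, within a fixed anti-diagonal, according to $i$); the paper's proof consists precisely of verifying the chain of overlaps $(A^{(i)}\times B^{(j)})\rightarrow(A^{(i+1)}\times B^{(j-1)})$ along each anti-diagonal (inequality (\ref{xi1})) together with the transitions between consecutive anti-diagonals, $(A^{(k)}\times B^{(0)})\rightarrow(A^{(0)}\times B^{(k+1)})$ and $(A^{(m-1)}\times B^{(i)})\rightarrow(A^{(i+1)}\times B^{(m-1)})$ (inequality (\ref{xi2})). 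Checking only overlaps within a fixed row or column does not rule out a gap between, say, $A^{(0)}\cdot B^{(m-1)}$ and $A^{(1)}\cdot B^{(0)}$, so your claimed worst case does not control connectivity of the union.

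Second, the quantitative claims you lean on are not right as stated. You assert that $\lambda\geq 1/(m+1)$ is equivalent to $\lambda\geq g$; in fact $\lambda\geq g=\frac{1-m\lambda}{m-1}$ is equivalent to $\lambda\geq\frac{1}{2m-1}$, which is strictly weaker than $\lambda\geq\frac{1}{m+1}$ for $m\geq3$ (the two coincide only when $m=2$). The hypothesis $\lambda\geq 1/(m+1)$ enters in two genuinely different places: in (\ref{xi2}) one needs $2\lambda t-\delta t\geq0$, which is exactly $\lambda\geq\frac{1}{2m-1}$, combined with a rearrangement claim for products with fixed sum (Claim \ref{claim00}); in (\ref{xi1}) one must bound $(x+\lambda t)(y+\lambda t)-(x+\delta t)(y-\delta t)$ from below at the extreme corner $(a,b+(1-\lambda)t)$, using $a\geq\delta$ --- this is where $I\in\mathcal{A}$ is genuinely used --- and then establishing positivity of $\lambda^{2}+g\lambda+\delta^{2}-g$ and of $\delta^{2}+\lambda\delta-g$ over $[\frac{1}{m+1},\frac{1}{m})$. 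None of this reduces to the single inequality $m\lambda+\lambda\geq1$, and the "polynomial inequality that should reduce after minimizing over the feasible region" is exactly the nontrivial content of the proposition, which your proposal defers rather than supplies. Your scaling remark (the configuration reproduces itself at deeper levels) is correct and is what makes Lemma \ref{key1} applicable, but it cannot substitute for the overlap verification at a single level with the correct ordering.
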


\begin{proof}
Given basic subintervals $I^{\ast}\subset I^{(p)}$ and $J^{\ast}\subset
I^{(q)}$ with $I^{\ast}=[a,a+t]$ and $J^{\ast}=[b,b+t]$ ($b\geq a$), we
suppose that $A^{(0)},\cdots,A^{(m-1)}$ $(B^{(0)},\cdots,B^{(m-1)})$ are basic
subintervals of $I^{\ast}$ ($J^{\ast}$ respectively) from left to right.

For rectangles $R_{1}=[a_{1},b_{1}]\times\lbrack a_{2},b_{2}]$ and
$R_{2}=[c_{1},d_{1}]\times\lbrack c_{2},d_{2}]$ with $a_{1},a_{2},c_{1}%
,c_{2}\geq0,$ we denote by $R_{1}\rightarrow R_{2}$ if $b_{1}b_{2}\geq
c_{1}c_{2}.$ By Lemma \ref{key1}, it suffices to verify that
\begin{align*}
& \ \ \ \ \ (A^{(0)}\times B^{(0)})\\
& \rightarrow(A^{(0)}\times B^{(1)})\rightarrow(A^{(1)}\times B^{(0)})\\
& \rightarrow(A^{(0)}\times B^{(2)})\rightarrow(A^{(1)}\times B^{(1)}%
)\rightarrow(A^{(2)}\times B^{(0)})\\
& \rightarrow(A^{(0)}\times B^{(3)})\rightarrow(A^{(1)}\times B^{(2)}%
)\rightarrow(A^{(2)}\times B^{(1)})\rightarrow(A^{(3)}\times B^{(0)})\\
& \cdots\\
& \rightarrow(A^{(0)}\times B^{(m-1)})\rightarrow(A^{(1)}\times B^{(m-2)}%
)\rightarrow\cdots\rightarrow(A^{(m-2)}\times B^{(1)})\rightarrow
(A^{(m-1)}\times B^{(0)})\\
& \rightarrow(A^{(1)}\times B^{(m-1)})\rightarrow(A^{(2)}\times B^{(m-2)}%
)\rightarrow\cdots\rightarrow(A^{(m-1)}\times B^{(1)})\\
& \cdots\\
& \rightarrow(A^{(m-1)}\times B^{(m-1)}).
\end{align*}
For example, please see Fig. 1 when $m=5.$
\begin{figure}[ptbh]\vspace{-0.3cm}
\centering\includegraphics[width=0.34\textwidth]{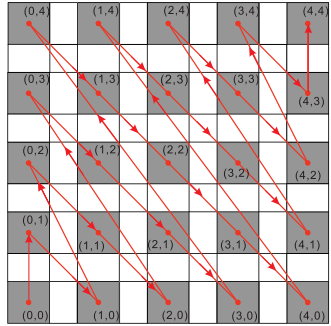}\vspace{-0.3cm}
\caption{$m=5$ and $(i,j)$ represents rectangle $A^{(i)}\times B^{(j)}$}
\end{figure}

\noindent In fact, we need to verify
\begin{equation}
(A^{(i)}\times B^{(j)})\rightarrow(A^{(i+1)}\times B^{(j-1)}),\label{xi1}%
\end{equation}
and
\begin{equation}
(A^{(k)}\times B^{(0)})\rightarrow(A^{(0)}\times B^{(k+1)}%
)\mbox{ and }(A^{(m-1)}\times B^{(i)})\rightarrow(A^{(i+1)}\times
B^{(m-1)}).\label{xi2}%
\end{equation}

\noindent\textbf{(1)} For inequality (\ref{xi1}), let $(a,b)+(i,j)\delta t=(x,y),$ then
$(a,b)+(i+1,j-1)\delta t=(x+\delta t,y-\delta t).$ We only need to show%
\begin{equation}
g(x,y)=(x+\lambda t)(y+\lambda t)-(x+\delta t)(y-\delta t)\geq0.\label{xi3}%
\end{equation}
Note that $x\geq a$, $y\leq b+(1-\lambda)t$ and%
\[
\frac{\partial g(x,y)}{\partial x}=\lambda t+\delta t>0\mbox{ and }\frac
{\partial g(x,y)}{\partial y}=-gt<0.
\]
We obtain that
\begin{align*}
g(x,y)  & \geq g(a,b+(1-\lambda)t)\\
& =t\left(  a\lambda+a\delta+t\lambda^{2}+t\delta^{2}-bg-gt+gt\lambda\right)
=t\cdot u(\lambda,t).
\end{align*}
Since $a\geq\delta$ and $b\leq1$ we have%
\begin{align*}
u(\lambda,t)  & \geq\delta\lambda+\delta^{2}+t\lambda^{2}+t\delta
^{2}-g-gt+gt\lambda\\
& =t\left(  \lambda^{2}+g\lambda+\delta^{2}-g\right)  +(\delta^{2}%
+\lambda\delta-g).
\end{align*}
By $\delta=\lambda+g$ and $g=\frac{1-m\lambda}{m-1},$ one can get%
\begin{align*}
& \lambda^{2}+g\lambda+\delta^{2}-g\\
& =\frac{m^{2}\lambda-m\lambda^{2}-m+2\lambda^{2}-3\lambda+2}{\left(
m-1\right)  ^{2}}=\frac{h(\lambda)}{\left(  m-1\right)  ^{2}}.
\end{align*}
Note that $\lambda<\frac{1}{m}\leq1/2,$ then%
\[
h^{\prime}(\lambda)=(m^{2}-3)-2\lambda\left(  m-2\right)  \geq m(m-1)-1\geq0.
\]
Hence for any $\lambda\geq\frac{1}{m+1}$ and any $m\geq2,$
\[
\frac{h(\lambda)}{\left(  m-1\right)  ^{2}}\geq\frac{h(\frac{1}{m+1})}{\left(
m-1\right)  ^{2}}=\frac{1}{\left(  m^{2}-1\right)  ^{2}}\left(  m^{2}%
-m+1\right)  >0.
\]
On the other hand, we obtain%
\begin{align*}
\delta^{2}+\lambda\delta-g  & =\frac{1}{\left(  m-1\right)  ^{2}}\left(
\left(  2-m\right)  \lambda^{2}+\left(  m^{2}-3\right)  \lambda+\left(
2-m\right)  \right) \\
& =\frac{1}{\left(  m-1\right)  ^{2}}v(\lambda).
\end{align*}
If $m=2$, then $\delta^{2}+\lambda\delta-g \geq 0$. If $m=3$, then
we  note that the symmetric axis of the graph $w=v(\lambda)$ is
\[
\lambda=\frac{m^{2}-3}{2(m-2)}\geq\frac{1}{2}\frac{m^{2}-4}{m-2}=\frac{m+2}%
{2}\geq2.
\]
which implies%
\begin{align*}
\inf_{\lambda\in\lbrack\frac{1}{m+1},\frac{1}{m})}v(\lambda)  & \geq
\min\left(  v(\frac{1}{m}),v(\frac{1}{m+1})\right) \\
& =\min\left(  \frac{2\left(  m-1\right)  ^{2}}{m^{2}},\frac{m^{2}%
-m+1}{\left(  m+1\right)  ^{2}}\right)  >0.
\end{align*}
Then (\ref{xi3}) follows.

\noindent\textbf{(2)} For inequality (\ref{xi2}), we will need the following

\begin{claim}
\label{claim00}If $x_{1}+y_{1}\geq x_{2}+y_{2}$ with $x_{1}<y_{1},$
$x_{2}<y_{2}$ and $x_{1}>x_{2},$ then we have $x_{1}y_{1}\geq x_{2}y_{2}.$
\end{claim}

\noindent In fact, let $H(x)=x(c-x)$ with $c=x_{1}+y_{1},$ then we have%
\[
x_{2}y_{2}\leq x_{2}(y_{2}+(x_{1}+y_{1})-(x_{2}+y_{2}))=H(x_{2})<H(x_{1})
\]
since $x_{2}<x_{1}<c/2.$

Let $(x_{1},y_{1})$ be the upper right corner of the rectangle $A^{(k)}\times
B^{(0)},$ and $(x_{2},y_{2})$ the lower left corner of $A^{(0)}\times
B^{(k+1)},$ then
\[
(x_{1}+y_{1})-(x_{2}+y_{2})=2\lambda t-\delta t=t\frac{(2m-1)\lambda-1}%
{m-1}\geq0
\]
due to $\lambda\geq\frac{1}{m+1}\geq\frac{1}{2m-1}.$ Using Claim
\ref{claim00}, we have $x_{1}y_{1}\geq x_{2}y_{2},$ i.e.,
\[
(A^{(k)}\times B^{(0)})\rightarrow(A^{(0)}\times B^{(k+1)}).
\]
In the same way, we have $(A^{(m-1)}\times B^{(i)})\rightarrow(A^{(i+1)}\times
B^{(m-1)}).$
\end{proof}
\noindent The following lemma is from the fact
\[
K_{\lambda}= \{0\}\cup\left( \bigcup_{n=0}^{\infty}\lambda^{n} ( K_{R})\right)
.
\]

\begin{lemma}
\label{translate}
\[
K_{\lambda}\cdot K_{\lambda}= \{0\}\cup\left( \bigcup_{n=0}^{\infty}%
\lambda^{n} (K_{R}\cdot K_{R})\right) .
\]

\end{lemma}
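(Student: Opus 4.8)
The plan is to exploit the multiplicative self-similarity of $K_\lambda$ inherited from its additive IFS structure. The starting point is the decomposition $K_\lambda = \{0\} \cup \bigl(\bigcup_{n=0}^\infty \lambda^n K_R\bigr)$, where $K_R = \bigcup_{i=1}^{m-1} f_i(K_\lambda)$ is the ``right-hand'' part of $K_\lambda$ that omits the leftmost piece $f_0(K_\lambda) = \lambda K_\lambda$. This decomposition follows by iterating the observation that $K_\lambda = f_0(K_\lambda) \cup \bigl(\bigcup_{i=1}^{m-1} f_i(K_\lambda)\bigr) = \lambda K_\lambda \cup K_R$: substituting repeatedly gives $K_\lambda = \lambda^n K_\lambda \cup \bigl(\bigcup_{k=0}^{n-1}\lambda^k K_R\bigr)$ for each $n$, and since $\lambda^n K_\lambda \to \{0\}$ in the Hausdorff sense (and $0 \in K_\lambda$), taking the union over all $n$ yields the claimed identity.

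Next I would compute $K_\lambda \cdot K_\lambda$ directly from this. Writing each factor using the decomposition,
\[
K_\lambda \cdot K_\lambda = \Bigl(\{0\}\cup\bigcup_{n\geq 0}\lambda^n K_R\Bigr)\cdot\Bigl(\{0\}\cup\bigcup_{p\geq 0}\lambda^p K_R\Bigr).
\]
The product of $\{0\}$ with anything contributes only $\{0\}$, so
\[
K_\lambda\cdot K_\lambda = \{0\}\cup\bigcup_{n\geq 0}\bigcup_{p\geq 0}\lambda^{n+p}(K_R\cdot K_R).
\]
Now the key simplification: the exponents $n+p$ range over all of $\mathbb{N}\cup\{0\}$, and for each fixed value $N = n+p$ the term $\lambda^N(K_R\cdot K_R)$ appears (indeed repeatedly, but that is harmless for a union). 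Hence the double union collapses to $\bigcup_{N\geq 0}\lambda^N(K_R\cdot K_R)$, giving exactly
\[
K_\lambda\cdot K_\lambda = \{0\}\cup\Bigl(\bigcup_{n=0}^\infty \lambda^n(K_R\cdot K_R)\Bigr),
\]
as claimed.

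The only point requiring a small amount of care is the legitimacy of distributing the product over the infinite unions and, in particular, whether $\{0\}$ really is the only extra point — i.e. whether $0 \in K_\lambda\cdot K_\lambda$ genuinely needs to be adjoined separately or is already captured. Since $0 = 0\cdot 0$ with $0 \in K_\lambda$, the point $0$ does lie in $K_\lambda\cdot K_\lambda$; and conversely if $xy = 0$ with $x,y\in K_\lambda$ then $x = 0$ or $y = 0$, so the ``$\{0\}$'' term accounts precisely for the products involving a zero factor, while all products of nonzero elements are scalar multiples $\lambda^{n+p}(\xi\eta)$ with $\xi,\eta \in K_R$. Distributivity over arbitrary unions, $S\cdot\bigl(\bigcup_\alpha T_\alpha\bigr) = \bigcup_\alpha (S\cdot T_\alpha)$, is a set-theoretic triviality valid for any collection, so no convergence or measure-theoretic subtlety arises here. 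I expect no real obstacle in this lemma; it is a bookkeeping step that isolates the scaling structure, reducing the description of $K_\lambda\cdot K_\lambda$ to the description of the single set $K_R\cdot K_R$, which is the substance of Theorem \ref{main1} and is handled by Proposition \ref{pro1} together with Lemma \ref{key1}.
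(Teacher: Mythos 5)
Your argument is correct and is essentially the paper's own: the paper states Lemma \ref{translate} without a written proof, merely noting that it follows from the decomposition $K_{\lambda}=\{0\}\cup\bigl(\bigcup_{n=0}^{\infty}\lambda^{n}K_{R}\bigr)$, which is exactly the starting point you use. Your write-up simply fills in the routine steps (distributing the product over the unions and collapsing the double union over $n+p$), all of which are sound.
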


\begin{proof}
[\textbf{Proof of Theorem \ref{main1}}]$\ $
By Lemma \ref{translate}, it remains to prove
\[
K_{R}\cdot K_{R}=\left( \cup_{I\in\mathcal{A}}\widehat{I}\right) \cup
\{x^{2}:x\in K_{R}\}.
\]
By Proposition \ref{pro1}, it follows that
\[
K_{R}\cdot K_{R}\supset\left( \cup_{I\in\mathcal{A}}\widehat{I}\right)
\cup\{x^{2}:x\in K_{R}\}.
\]
Conversely, for any $x, y\in K_{R}$, if $x=y$, then $xy\in\{x^{2}:x\in
K_{R}\}$. If $x\neq y$, then there exists some basic interval, denoted by $I$,
such that $x\in I^{(p)}, y\in I^{(q)}$ for $p\neq q$. Therefore, by
Proposition \ref{pro1}, we have that $x\cdot y\in\cup_{I\in\mathcal{A}%
}\widehat{I}.$
\end{proof}

\subsection{Proof of Theorem \ref{main2}}

\begin{lemma}
\label{c1} Let $\lambda\in\left[  1/(m+1),1/m\right)  $. For any $n\geq1$, we
have
\[
\mathcal{L}\left(  (K_{R}\cdot K_{R})\backslash\bigcup\nolimits_{I\in
\mathcal{A},\text{\emph{rank}}(I)\leq k}\widehat{I}\right)  \leq3
\dfrac{(m\lambda)^{k+1}}{1-m\lambda}.
\]

\end{lemma}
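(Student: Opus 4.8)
The plan is to understand the "missing" part of $K_R\cdot K_R$ after removing $\widehat I$ for all $I\in\mathcal A$ of rank at most $k$, and to bound it crudely by a union of products of basic intervals of generation roughly $k$. First I would recall that, by Theorem \ref{main1} (or rather the identity established in its proof), $K_R\cdot K_R = \bigl(\bigcup_{I\in\mathcal A}\widehat I\bigr)\cup\{x^2:x\in K_R\}$, so a point $z\in(K_R\cdot K_R)\setminus\bigcup_{I\in\mathcal A,\ \mathrm{rank}(I)\le k}\widehat I$ is either of the form $x^2$ with $x\in K_R$, or it lies in $\widehat I$ for some $I\in\mathcal A$ of rank strictly greater than $k$. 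Since $\mathcal L(\{x^2:x\in K_R\})=0$ (noted in the second remark, as $K_R$ has Lebesgue measure zero and $t\mapsto t^2$ is Lipschitz on $[0,1]$), the whole measure comes from the second case.

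Next I would estimate the measure coming from $\widehat I$ with $\mathrm{rank}(I)=n\ge k+1$. Writing $I=f_\sigma([0,1])$ with $|\sigma|=n$ and $i_1\ge1$, we have $I\subset f_{i_1}([0,1])$, so $I\subset[\delta,1]$ and $|I|=\lambda^n$. Then each $I^{(p)}\cdot I^{(q)}$ is contained in an interval of length at most $2\cdot(\sup I)\cdot|I|\le 2\lambda^n$ — more precisely, if $I=[a,a+\lambda^n]$ then $I^{(p)}\cdot I^{(q)}\subset[a^2,(a+\lambda^n)^2]$, an interval of length $(2a+\lambda^n)\lambda^n\le 3\lambda^n$ since $a\le1$. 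Hence $\mathcal L(\widehat I)\le 3\lambda^n$ for every $I$ of rank $n$. The number of $I\in\mathcal A$ of rank exactly $n$ is at most $(m-1)m^{n-1}\le m^n$ (the first digit ranges over $1,\dots,m-1$, the remaining $n-1$ digits over $0,\dots,m-1$). Summing, the part of the left-hand side contained in $\bigcup_{\mathrm{rank}(I)=n}\widehat I$ for a fixed $n\ge k+1$ has measure at most $3 m^n\lambda^n=3(m\lambda)^n$, and summing the geometric series over $n\ge k+1$ (legitimate since $m\lambda<1$) gives
\[
\sum_{n=k+1}^\infty 3(m\lambda)^n=3\,\frac{(m\lambda)^{k+1}}{1-m\lambda}.
\]
Combining this with the null set $\{x^2:x\in K_R\}$ yields the claimed bound by subadditivity of Lebesgue measure.

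The only genuinely delicate point is the bookkeeping on what "rank" means and which intervals $I$ actually contribute: one must be sure that a point excluded from $\widehat I$ for all low-rank $I$ really is forced into some high-rank $\widehat I$ (or into $\{x^2\}$), which is exactly what the structure theorem gives, and that the counting of rank-$n$ intervals in $\mathcal A$ is correct. Everything else is a one-line length estimate for $I^{(p)}\cdot I^{(q)}\subset I\cdot I$ together with summing a geometric series; there is no hidden analytic obstacle, since Proposition \ref{pro1} is not even needed for the upper bound — only the set-theoretic inclusion $K_R\cdot K_R\subset\bigl(\bigcup_{I\in\mathcal A}\widehat I\bigr)\cup\{x^2:x\in K_R\}$ and crude diameter control.
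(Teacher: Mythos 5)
Your proposal is correct and follows essentially the same route as the paper: bound $\mathcal{L}(\widehat{I})\le 3|I|$ via the inclusion $\widehat{I}\subset I\cdot I\subset[a^2,(a+|I|)^2]$, discard the null set $\{x^2:x\in K_R\}$, count the rank-$n$ intervals of $\mathcal{A}$, and sum the geometric series. If anything, your write-up is more careful than the paper's (whose displayed sum $\sum_{I\in\mathcal{A},\,\mathrm{rank}(I)>k}3(m\lambda)^{k+1}$ is garbled and only makes sense with exactly the interval count and per-rank summation you supply), so no changes are needed.
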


\begin{proof}
First, note that for any basic interval $I=[a,a+t]$, by the definition of
$\widehat{I}$, we have
\[
\mathcal{L}(\widehat{I})\leq(a+t)^{2}-a^{2}=t(2a+t)\leq3t=3\mathcal{L}(I).
\]
Now we have the following estimation:
\begin{align*}
& \mathcal{L}\left( K_{R}\cdot K_{R}\setminus\left( \bigcup\nolimits_{I\in
\mathcal{A}\text{,\textrm{rank}}(I)\leq k}\widehat{I}\right) \right) \\
& \leq\mathcal{L}\left( \bigcup\nolimits_{I\in\mathcal{A}\text{,\textrm{rank}%
}(I)>k}\widehat{I}\right) \\
& \leq \sum_{I\in\mathcal{A}\text{,\textrm{rank}}(I)>k}3(m\lambda)^{k+1}\\
& =3\dfrac{(m\lambda)^{k+1}}{1-m\lambda}.
\end{align*}
\end{proof}
\noindent Let
\[
\phi_{n}(\lambda)=\mathcal{L}\left(  \bigcup\nolimits_{k=0}^{n}\lambda
^{k}\left( \bigcup\nolimits_{I\in\mathcal{A}\text{,\textrm{rank}}(I)\leq
n}\widehat{I}\right) \right)  ,
\]
which is a continuous function for $\lambda\in\lbrack1/(m+1),1/m).$ Lemma
\ref{c1} implies the following result.

\begin{lemma}
\label{uni} \label{c3} If $\alpha>0$, then $\phi_{n}(\lambda)$ uniformly
converges to $\Phi(\lambda)=\mathcal{L}(K_{\lambda}\cdot K_{\lambda})$ for any
$\lambda\in\lbrack1/(m+1),1/m-\alpha).$
\end{lemma}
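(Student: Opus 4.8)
The plan is to strip away the measure-zero debris in the formula of Theorem \ref{main1}, identify $\Phi(\lambda)$ and $\phi_n(\lambda)$ as the measures of a ``clean'' set and a truncation of it, and then estimate the gap by geometric series whose ratios are controlled uniformly in $\lambda$. Write $S:=\bigcup_{I\in\mathcal{A}}\widehat{I}$ and $S_{n}:=\bigcup_{I\in\mathcal{A},\,\mathrm{rank}(I)\le n}\widehat{I}$, so that $\phi_{n}(\lambda)=\mathcal{L}\big(\bigcup_{k=0}^{n}\lambda^{k}S_{n}\big)$. First I would record that, by Theorem \ref{main1},
\[
K_{\lambda}\cdot K_{\lambda}=\{0\}\cup\Big(\bigcup_{k=0}^{\infty}\lambda^{k}S\Big)\cup\Big(\bigcup_{k=0}^{\infty}\lambda^{k}\{x^{2}:x\in K_{R}\}\Big),
\]
and that the first and third pieces are Lebesgue-null: $K_{R}\subset K_{\lambda}$ is covered by $m^{n}$ basic intervals of length $\lambda^{n}$, so $\mathcal{L}(K_{\lambda})\le(m\lambda)^{n}\to0$; the map $x\mapsto x^{2}$ is Lipschitz on $[0,1]$; and a countable union of null sets is null. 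Since also $S\subset[0,1]$, this gives $\Phi(\lambda)=\mathcal{L}\big(\bigcup_{k=0}^{\infty}\lambda^{k}S\big)$.

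Because $S_{n}\subset S$, the set $\bigcup_{k=0}^{n}\lambda^{k}S_{n}$ is contained in $\bigcup_{k=0}^{\infty}\lambda^{k}S$, hence $0\le\Phi(\lambda)-\phi_{n}(\lambda)=\mathcal{L}\big(\bigcup_{k=0}^{\infty}\lambda^{k}S\setminus\bigcup_{k=0}^{n}\lambda^{k}S_{n}\big)$. The elementary inclusion to exploit is
\[
\bigcup_{k=0}^{\infty}\lambda^{k}S\ \setminus\ \bigcup_{k=0}^{n}\lambda^{k}S_{n}\ \subset\ \Big(\bigcup_{k=0}^{n}\lambda^{k}\big(S\setminus S_{n}\big)\Big)\ \cup\ \Big(\bigcup_{k=n+1}^{\infty}\lambda^{k}S\Big),
\]
proved by splitting on the index $k$ of a piece $\lambda^{k}S$ containing a given point: if $k>n$ it lies in the second set, and if $k\le n$ it lies in $\lambda^{k}S_{n}$ or in $\lambda^{k}(S\setminus S_{n})$. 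For the first set on the right, $S\setminus S_{n}\subset(K_{R}\cdot K_{R})\setminus\bigcup_{I\in\mathcal{A},\,\mathrm{rank}(I)\le n}\widehat{I}$, so Lemma \ref{c1} yields $\mathcal{L}(S\setminus S_{n})\le 3(m\lambda)^{n+1}/(1-m\lambda)$, whence
\[
\mathcal{L}\Big(\bigcup_{k=0}^{n}\lambda^{k}(S\setminus S_{n})\Big)\le\sum_{k=0}^{n}\lambda^{k}\,\mathcal{L}(S\setminus S_{n})\le\frac{1}{1-\lambda}\cdot\frac{3(m\lambda)^{n+1}}{1-m\lambda};
\]
for the second set, $\mathcal{L}(\lambda^{k}S)\le\lambda^{k}$ gives $\mathcal{L}\big(\bigcup_{k=n+1}^{\infty}\lambda^{k}S\big)\le\lambda^{n+1}/(1-\lambda)$.

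It remains to make this uniform. If $[1/(m+1),1/m-\alpha)$ is empty there is nothing to prove; otherwise $0<\alpha<1/m$, and for every $\lambda$ in this interval, $1-\lambda\ge1-1/m\ge1/2$, $\lambda<1/m\le1/2$, and $m\lambda<1-m\alpha=:\rho\in(0,1)$, so $1/(1-m\lambda)\le1/(m\alpha)$ and $(m\lambda)^{n+1}\le\rho^{n+1}$. Combining the two displayed bounds,
\[
0\le\Phi(\lambda)-\phi_{n}(\lambda)\le2\Big(\frac{3\rho^{n+1}}{m\alpha}+2^{-(n+1)}\Big)=:\varepsilon_{n},
\]
and $\varepsilon_{n}\to0$ as $n\to\infty$ with $\varepsilon_{n}$ independent of $\lambda$, which is precisely the asserted uniform convergence. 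The only delicate points are the bookkeeping in the set-difference inclusion and the check that the auxiliary null sets (the orbit of $\{x^{2}:x\in K_{R}\}$ and the point $0$) do not affect $\Phi$; everything else is a geometric-series estimate, with Lemma \ref{c1} providing the single non-trivial input, namely the decay of $\mathcal{L}(S\setminus S_{n})$.
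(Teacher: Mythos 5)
Your proposal is correct and follows essentially the same route as the paper: both arguments split $\Phi(\lambda)-\phi_{n}(\lambda)$ into the tail $\bigcup_{k>n}\lambda^{k}(\cdot)$ plus the rank-truncation error for $k\le n$, invoke Lemma \ref{c1} for the latter, and bound both by geometric series made uniform via $m\lambda\le 1-m\alpha$ and $1-\lambda\ge 1/2$. The only cosmetic difference is that you organize the estimate through an explicit set inclusion (after discarding the null sets $\{0\}$ and the orbit of $\{x^{2}:x\in K_{R}\}$), whereas the paper uses a triangle inequality through the intermediate quantity $\mathcal{L}\bigl(\bigcup_{k=0}^{n}\lambda^{k}(K_{R}\cdot K_{R})\bigr)$, with the null-set issue absorbed into Lemma \ref{c1}.
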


\begin{proof}
Note that
\[
\Phi(\lambda)=\mathcal{L}\left(  \bigcup\nolimits_{k=0}^{\infty}\lambda
^{k}(K_{R}\cdot K_{R})\right)  .
\]
It follows from Lemma \ref{c1} that
\begin{align*}
& |\Phi(\lambda)-\phi_{n}(\lambda)|\\
& \leq\left\vert \Phi(\lambda)-\mathcal{L}\left(  \bigcup\nolimits_{k=0}%
^{n}\lambda^{k}(K_{R}\cdot K_{R})\right)  \right\vert +\left\vert
\mathcal{L}\left(  \bigcup\nolimits_{k=0}^{n}\lambda^{k}(K_{R}\cdot
K_{R})\right)  -\phi_{n}(\lambda)\right\vert \\
& \leq\mathcal{L}\left(  \bigcup\nolimits_{k=n+1}^{\infty}\lambda^{k}%
(K_{R}\cdot K_{R})\right)  +\sum_{k=0}^{n}\lambda^{k}3 \dfrac{(m\lambda
)^{n+1}}{1-m\lambda}\\
& \leq\sum\limits_{k=n+1}^{\infty}\lambda^{k}+3\dfrac{1-\lambda^{n+1}%
}{1-\lambda} \dfrac{(1-m\alpha)^{n+1}}{1-m\lambda}\leq\frac{m ^{-n-1}%
}{1-\lambda}+\dfrac{3}{1-\lambda} \dfrac{(1-m\alpha)^{n+1}}{1-m\lambda}.
\end{align*}

\end{proof}

\begin{proof}
[\textbf{Proof of Theorem \ref{main2}}]$\ $

By the continuity of $\phi_{n}(\lambda)$ and Lemma \ref{uni}, it follows that
$\Phi(\lambda)$ is continuous.
\end{proof}

\section{Final remarks}
From the graph of function $\mathcal{L}(K_{\lambda}\cdot K_{\lambda})$ (Fig. 2), we pose a natural
\begin{conjecture}
The function $\mathcal{L}(K_{\lambda}\cdot K_{\lambda})$ is an increasing
function for any $\lambda\in[1/(m+1),1/m).$
\end{conjecture}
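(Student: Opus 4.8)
\noindent The plan is to reduce the conjecture to a monotonicity statement for the finite-rank approximants $\phi_{n}$ and then analyse those directly.

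First, by Lemma \ref{uni} the functions $\phi_{n}(\lambda)$ converge pointwise (indeed uniformly on each $[1/(m+1),1/m-\alpha)$) to $\Phi(\lambda)=\mathcal{L}(K_{\lambda}\cdot K_{\lambda})$. Since a pointwise limit of nondecreasing functions is nondecreasing, it suffices to prove that \emph{each} $\phi_{n}$ is nondecreasing on $[1/(m+1),1/m)$. Now $\phi_{n}$ is the Lebesgue measure of the union of the scaled copies $\lambda^{k}\widehat{I}$ with $0\le k\le n$, $I\in\mathcal{A}$, $\mathrm{rank}(I)\le n$; by Proposition \ref{pro1} each $\widehat{I}$ is an explicit union of at most $\binom{m}{2}$ genuine intervals with endpoints $a_{p}a_{q}$ and $(a_{p}+\lambda^{|\sigma|+1})(a_{q}+\lambda^{|\sigma|+1})$, where $a_{p}=a_{\sigma}+p\,\tfrac{1-\lambda}{m-1}\lambda^{|\sigma|}$ and $a_{\sigma}$ is a polynomial in $\lambda$ determined by $\sigma$. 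So $\phi_{n}$ is the measure of a finite union of intervals with polynomial endpoints: it is continuous, and real-analytic on each maximal subinterval of $[1/(m+1),1/m)$ on which the combinatorial type of the configuration (the left-to-right order of all endpoints, and which pairs of intervals overlap) is constant.

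A useful complementary reduction isolates where the measure ``comes from''. Writing $S_{\lambda}=K_{R}\cdot K_{R}\subset[\delta^{2},1]$ with $\delta=\min K_{R}=\tfrac{1-\lambda}{m-1}$, Lemma \ref{translate} gives $K_{\lambda}\cdot K_{\lambda}=\{0\}\cup\bigcup_{k\ge0}\lambda^{k}S_{\lambda}$; since $\lambda<1/m$ is equivalent to $\lambda<\delta$, we get $\lambda^{2}<\delta^{2}$, so $\lambda^{k}S_{\lambda}$ and $\lambda^{k+2}S_{\lambda}$ are disjoint and the chain $\{\lambda^{k}S_{\lambda}\}$ has only consecutive overlaps. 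Inclusion--exclusion along the chain gives
\[
\Phi(\lambda)=\frac{\mathcal{L}(S_{\lambda})-\mathcal{L}(S_{\lambda}\cap\lambda S_{\lambda})}{1-\lambda},
\]
and $\mathcal{L}(S_{\lambda}\cap\lambda S_{\lambda})=0$ whenever $\delta^{2}\ge\lambda$ (e.g.\ $m=2$, $\lambda\le(3-\sqrt{5})/2$), in which regime the conjecture reduces to the monotonicity of $\mathcal{L}(K_{R}\cdot K_{R})/(1-\lambda)$ and in particular follows once $\lambda\mapsto\mathcal{L}(K_{R}\cdot K_{R})$ is known to be nondecreasing; in general one must also control the correction term.

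The remaining, and I expect hard, step is to prove $\phi_{n}'(\lambda)\ge 0$ on each combinatorial cell, \emph{uniformly} in $n$. The obstruction is that the combinatorial type genuinely varies with $\lambda$: as $\lambda$ grows the gaps between basic intervals shrink, new overlaps keep appearing, and the breakpoints accumulate (near $1/m$, and possibly elsewhere), so no single formula works throughout. Within a cell $\phi_{n}(\lambda)$ is an alternating sum of products of endpoint polynomials, and differentiating produces terms $\pm\,\partial_{\lambda}(\text{endpoint})$; the natural strategy is to pair each negative ``left-endpoint'' contribution with a dominating positive ``right-endpoint'' one, a quantitative strengthening of the relations $R_{1}\rightarrow R_{2}$ established in Proposition \ref{pro1} and Claim \ref{claim00}. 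A geometric alternative would be, for $\lambda_{1}<\lambda_{2}$, an order-preserving map $K_{\lambda_{2}}\cdot K_{\lambda_{2}}\to K_{\lambda_{1}}\cdot K_{\lambda_{1}}$ that does not increase Lebesgue measure; but the obvious digit-matching map $K_{\lambda_{2}}\to K_{\lambda_{1}}$ is expanding, not contracting, and multiplication amplifies this, so I expect the coupling route to be harder. If some individual $\phi_{n}$ turns out not to be monotone, one would instead have to control its non-monotone part, which vanishes in the limit at the rate of Lemma \ref{c1}.
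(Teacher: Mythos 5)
The statement you were asked to prove is posed in the paper as a \emph{conjecture}: the authors offer no proof at all, only a numerically computed graph of $\mathcal{L}(K_{\lambda}\cdot K_{\lambda})$ for $m=2$ as evidence. Your proposal therefore has to stand on its own, and it does not close: you say explicitly that the key step --- showing each $\phi_{n}$ is nondecreasing, or equivalently controlling the $\lambda$-derivative of the alternating sum of endpoint polynomials on each combinatorial cell, uniformly in $n$ --- remains open. That is exactly where the entire difficulty of the conjecture lives, so what you have is a reduction and a research plan, not a proof. Note also that your first reduction is only sufficient, not necessary: the truncations in rank and in scale could make individual $\phi_{n}$ non-monotone even if $\Phi$ is, as you yourself concede at the end.

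That said, your preliminary reductions are all correct and go beyond what the paper offers. The limit-of-monotone-functions argument via Lemma \ref{uni} is valid; the identification of $\widehat{I}$ as a finite union of intervals $[a_{p}a_{q},b_{p}b_{q}]$ with polynomial endpoints is right (using $\delta=\lambda+g=\frac{1-\lambda}{m-1}$); the chain structure of $\{\lambda^{k}S_{\lambda}\}$ with only consecutive overlaps follows correctly from $\lambda<1/m\Leftrightarrow\lambda<\delta$, and the resulting identity $\Phi(\lambda)=\bigl(\mathcal{L}(S_{\lambda})-\mathcal{L}(S_{\lambda}\cap\lambda S_{\lambda})\bigr)/(1-\lambda)$ is a genuine simplification, as is the observation that the correction term vanishes on the nonempty subinterval where $\delta^{2}\geq\lambda$. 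But none of this establishes monotonicity of $\lambda\mapsto\mathcal{L}(S_{\lambda})$ either. The concrete gap is that no tool in the paper (Proposition \ref{pro1}, Claim \ref{claim00}, Lemma \ref{c1}) compares the product sets at two \emph{different} values of $\lambda$: Proposition \ref{pro1} orders rectangles at a fixed $\lambda$, and your proposed quantitative strengthening --- pairing each negative left-endpoint derivative with a dominating positive right-endpoint derivative across changing overlap patterns --- is plausible but nowhere carried out. Until that step is done the conjecture remains open, for you and for the authors alike.
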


\begin{figure}[ptbh]
\centering\includegraphics[width=0.5\textwidth]{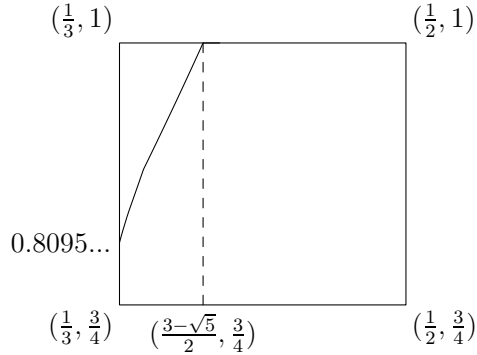}\caption{The graph of $\mathcal{L}(K_{\lambda}\cdot K_{\lambda})$ with $m=2$}
\end{figure}

\end{document}